\newcommand{\CN}{\mathbb{C}}
\newcommand{\ZN}{\mathbb{Z}}
\newcommand{\mc}{\mathcal}
\newcommand{\abs}[1]{\lvert #1\rvert}
\newcommand{\ovl}{\overline}
\renewcommand{\th}{\vartheta}
\begin{document}

\title*{A different approach to positive traces on generalized $q$-Weyl algebras}
\author{Daniil Klyuev}
\institute{Daniil Klyuev \at Department of Mathematics, Massachusetts Institute of Technology, Cambridge, MA 02139, USA \email{klyuev@mit.edu}}
%
%
\maketitle

\abstract{Positive twisted traces are mathematical objects that could be useful in computing certain parameters of superconformal field theories. The case when $\mc{A}$ is a $q$-Weyl algebra and $\rho$ is a certain antilinear automorphism of $\mc{A}$ was considered in~\cite{K}. Here we consider more general choices of $\rho$. In particular, we show that for $\rho$ corresponding to a standard Schur index of a four-dimensional gauge theory a positive trace is unique.
}
\section{Introduction}
\label{SecIntroduction}
Let $\mc{A}$ be a generalized $q$-Weyl algebra, it is generated by $u,v,Z,Z^{-1}$ with relations $ZuZ^{-1}=q^2u$, $ZvZ^{-1}=q^{-2}v$, $uv=P(q^{-1}Z)$, $vu=P(qZ)$, where $P$ is a Laurent polynomial. Let $\rho$ be a conjugation on $\mc{A}$. We are interested in positive traces on $\mc{A}$. We assume that $0<q<1$.

Positive traces are defined as follows. Let $\mc{A}$ be a noncommutative algebra over $\CN$, $\rho$ be an antilinear automorphism of $\mc{A}$. Let $g=\rho^2$. We say that a linear map $T\colon \mc{A}\to\CN$ is a {\it $g$-twisted trace} if $T(ab)=T(bg(a))$ for all $a,b\in\mc{A}$. A $g$-twisted trace is {\it positive} if $T(a\rho(a))>0$ for all nonzero $a\in\mc{A}$. 



Positive traces for $q$-Weyl algebras appear in the study of the Coulomb branch of 4-dimensional superconformal field theories~\cite{DG, GT}.

We will consider $g$-twisted traces for automorphisms $g$ such that $g(Z)=Z$. In this case we have $T(ZaZ^{-1})=T(a)$ for all $a\in \mc{A}$. This means that $T$ is zero for any expression $u^kR(z)$ and $v^kR(z)$, where $R$ is a Laurent polynomial and $k$ is a positive integer. Hence $T$ is uniquely defined by its values on Laurent polynomials.

More precisely, we will consider $g=g_k$ such that $g(Z)=Z$, $g(u)=q^{k}Z^{-k} u$, $g(v)=q^{k}Z^{k} v$ for some integer $k$.

We will identify a trace $T$ on $\mc{A}$ with its restriction to $\CN[Z,Z^{-1}]\subset \mc{A}$ below.

\section{Traces via formal integral}
\label{SecMain}
Let $P$ be a Laurent polynomial that has $n$ nonzero roots. For simplicity, we will consider the case when $P$ has no roots with absolute value $q^{\pm 1}$, but we expect that our methods could be modified to work in the general case.

Similarly to~\cite{K}, $T\colon \CN[Z,Z^{-1}]\to \CN$ is a $g_k$-twisted trace if and only if $T(uvR(q^{-1}Z)-vR(Z)q^kZ^{-k}u)=0$ for all $R\in\CN[z,z^{-1}]$. We have \[vR(Z)q^kZ^{-k}u=vuR(qZ)Z^{-k}=P(qZ)R(qZ)Z^{-k}.\] Hence $T$ is a twisted trace if and only if
\[T(P(q^{-1}Z)R(q^{-1}Z)-P(qZ)R(qZ)Z^{-k})=0\] for all Laurent polynomials $R$.
\subsection{Properties of the vector space of formal power series.}
Let $V=\CN[[z,z^{-1}]]$ be the linear space of two-sided formal power series $\sum_{i=-\infty}^{\infty}a_i z^i$. Note that $V$ is a $\CN[z]$-module. Let $CT$ be the constant term map $\sum a_iz_i\mapsto a_0$.

For a nonzero Laurent polynomial $P$ let $P_r^{-1}\in \CN((z))$ be its inverse in Laurent series and $P_l^{-1}\in \CN((z^{-1}))$ be its inverse in Laurent series in the opposite direction.

\begin{lemma}
\label{LemMultiplicationByP}
The map $w(z)\mapsto w(z)P(z)$ is surjective and has kernel of dimension $n$. Moreover, if $P$ has distinct roots $\alpha_1,\ldots,\alpha_k$ with multiplicity $m_1,\ldots,m_k$ respectively, then the kernel is linearly spanned by the elements $f_{i,j}=\sum_{l\in\ZN} l(l-1)\cdots (l-i)\alpha_j^{-l}z^l$, where $j=1,\ldots,k$, $i=0,\ldots,m_j-1$.
\end{lemma}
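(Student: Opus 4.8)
The plan is to turn both assertions into a statement about a single two-sided linear recurrence, and then to build the explicit basis as generalized eigenvectors of the shift operator. First I would dispose of the Laurent part: writing $P(z)=z^{a}Q(z)$ with $Q$ an honest polynomial and $Q(0)\neq 0$, multiplication by $z^{a}$ is a bijection of $V$ (it merely shifts the coefficient sequence), so the maps $w\mapsto wP$ and $w\mapsto wQ$ have the same kernel and the same image, while $P$ and $Q$ have the same nonzero roots $\alpha_{j}$. Hence I may assume $P=\sum_{i=0}^{n}p_{i}z^{i}$ with $p_{0}=P(0)\neq 0$ and $p_{n}\neq 0$, so that $\deg P=n$ equals the number of nonzero roots counted with multiplicity. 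Writing $w=\sum_{l}w_{l}z^{l}$, the coefficient of $z^{l}$ in $wP$ is $\sum_{i=0}^{n}p_{i}w_{l-i}$, so the equation $wP=g$ is exactly the inhomogeneous recurrence $\sum_{i=0}^{n}p_{i}w_{l-i}=g_{l}$ of order $n$, whose two extreme coefficients $p_{0}$ and $p_{n}$ are both nonzero.

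This last observation is the crux. Because $p_{0}\neq 0$ I can solve for $w_{l}$ in terms of $w_{l-1},\dots,w_{l-n}$ and march the recurrence forward, and because $p_{n}\neq 0$ I can solve for $w_{l-n}$ and march it backward; so for any prescribed $g$ and any choice of $n$ consecutive values $w_{l_{0}},\dots,w_{l_{0}+n-1}$ there is a unique two-sided sequence $w$ satisfying the recurrence. Taking $g$ arbitrary produces a solution of $wP=g$, proving surjectivity; taking $g=0$ shows that $\ker$ of the multiplication map is parametrized bijectively by the $n$ free initial values, hence has dimension exactly $n$.

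It remains to identify the basis. Multiplication by $z$ is the shift $S\colon\sum w_{l}z^{l}\mapsto\sum w_{l-1}z^{l}$, and since $w\mapsto wP$ is an algebra homomorphism on Laurent polynomials, the multiplication map is $p_{n}\prod_{j}(S-\alpha_{j})^{m_{j}}$. A direct computation shows that applying $S-\alpha_{j}$ to a sequence $w_{l}=p(l)\,\alpha_{j}^{-l}$ gives $\alpha_{j}\bigl(p(l-1)-p(l)\bigr)\alpha_{j}^{-l}$, so it sends (a polynomial of degree $d$)$\cdot\alpha_{j}^{-l}$ to (a polynomial of degree $d-1$)$\cdot\alpha_{j}^{-l}$. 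Hence $(S-\alpha_{j})^{m_{j}}$ annihilates every sequence $p(l)\alpha_{j}^{-l}$ with $\deg p<m_{j}$, and since the factors commute such sequences lie in the kernel; the elements $f_{i,j}$ are precisely these, with $p$ running through the falling factorials, a basis of the polynomials of degree $<m_{j}$. There are $\sum_{j}m_{j}=n$ of them, so by the dimension count they form a basis as soon as they are linearly independent.

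The one step I expect to require genuine care, rather than the recurrence bookkeeping, is this independence across distinct roots: a vanishing combination yields an identity $\sum_{j}q_{j}(l)\,\alpha_{j}^{-l}=0$ for all $l\in\ZN$ with $\deg q_{j}<m_{j}$, and I would conclude $q_{j}\equiv 0$ from the standard fact that the sequences $l\mapsto l^{t}\beta^{l}$, for distinct nonzero ratios $\beta=\alpha_{j}^{-1}$ and varying $t$, are linearly independent (for instance because they solve linear recurrences with distinct characteristic roots, or via a confluent Vandermonde determinant). Everything else follows almost immediately once one notices that both extreme coefficients of $P$ are nonzero.
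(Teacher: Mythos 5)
Your proof is correct, but it takes a genuinely different route from the paper's. For surjectivity the paper splits $w=w_++w_-$ into its nonnegative and negative tails and writes down a preimage in closed form, $w=(w_+P_r^{-1}+w_-P_l^{-1})P$, using the two one-sided inverses $P_r^{-1}\in\CN((z))$ and $P_l^{-1}\in\CN((z^{-1}))$; you instead recast $wP=g$ as a two-sided order-$n$ recurrence and march it forward and backward from $n$ free initial values, which yields surjectivity and the exact kernel dimension in a single stroke. The paper obtains the dimension separately, by induction on $n$: it factors $P$ into linear factors, uses surjectivity so that kernel dimensions add under composition, and computes the kernel of $z-a$ by hand. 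Both arguments ultimately rest on the same observation --- after stripping off the power $z^a$, both the lowest and the highest coefficients of $P$ are nonzero (this is what makes the two one-sided inverses exist, and equally what lets you solve the recurrence in both directions) --- but your version is more self-contained, and it treats the basis claim more carefully: the paper merely asserts that the $f_{i,j}$ lie in the kernel and are independent (``it is not hard to see''), whereas you verify membership via the factorization $M_P=p_n\prod_j(S-\alpha_j)^{m_j}$ and the degree-dropping computation for $S-\alpha_j$, and independence via the standard fact about sequences $l\mapsto l^t\beta^l$ with distinct $\beta$. One point worth flagging explicitly: the formula for $f_{i,j}$ as printed, with polynomial part $l(l-1)\cdots(l-i)$ and $i$ running up to $m_j-1$, read literally gives polynomials of degrees $1,\ldots,m_j$, which is off by one --- for a simple root the kernel of multiplication by $z-\alpha$ is spanned by $\sum_l\alpha^{-l}z^l$, not $\sum_l l\alpha^{-l}z^l$. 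Your reading (falling factorials of degree $<m_j$, starting with the empty product $1$) is the one under which the lemma is true, and your computation is precisely what justifies it; it would be worth stating that correction rather than passing over it silently.
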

\begin{proof}
Let $w=\sum a_iz^i$ be an element of $V$. Let $w_+=\sum_{i\geq 0}a_iz^i$, $w_-=\sum_{i<0} a_iz^i$. Then $w_+=(w_+P_r^{-1})P$ and $w_-=(w_-P_l^{-1})P$, so that $w=(w_+P_r^{-1}+w_-P_l^{-1})P$. This proves surjectivity.

We turn to the statement on the dimension of the kernel. Using the surjectivity and induction on $n$ it is enough to consider the case $P=z-a$ for $a\in \CN^{\times}$. In this case direct computation gives that the kernel is one-dimensional and spanned by $\sum a^{-i}z^i$.

It is not hard to see that the $n$ elements in the statement of the theorem belong to the kernel and are linearly independent, hence they span the kernel.
\end{proof}

\subsection{Growth condition on traces.}
Let $T\colon \CN[z,z^{-1}]\to\CN$ be a trace. Let $T(z^i)=c_i$. Define an element $w$ of $\CN[[z,z^{-1}]]$ by $w=\sum c_i z^{-i}$. Then $T(R(z))=CT(R(z)w(z))$ for any Laurent polynomial $R$.

Let $T$ be a positive trace with respect to a conjugation $\rho$ such that $\rho(Z)=Z^{-1}$. Assume that $T(1)=1$, so that $c_0=1$.
\begin{lemma}
\label{LemCiAtMostOne}
For all integers $k$ we have $\abs{T(z^k)}< 1$. Hence $\abs{c_i}<1$ for all $i\neq 0$.
\end{lemma}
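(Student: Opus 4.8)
The plan is to extract the bound from positivity by testing the trace only on the two-dimensional subspace $W=\mathrm{span}_{\CN}\{1,Z^k\}\subset\mc{A}$; for $k\neq 0$ the elements $1$ and $Z^k$ are linearly independent, so $W$ is genuinely two-dimensional. The point is that the unknown moments $c_{\pm k}$ will appear as the off-diagonal entries of a Gram matrix that positivity forces to be positive definite.

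First I would record the consequences of the special conjugation $\rho(Z)=Z^{-1}$. Because $Z^k$ is a power of the single element $Z$, we have $\rho(Z^k)=\rho(Z)^k=Z^{-k}$ irrespective of whether $\rho$ is multiplicative or anti-multiplicative, and hence, using invertibility of $Z$, the product $Z^k\rho(Z^k)=Z^kZ^{-k}=1$. This normalization is what makes the diagonal Gram entries equal to $T(1)=c_0=1$ rather than to some further unknown moment.

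Next I would consider the sesquilinear form $\langle a,b\rangle=T(a\rho(b))$, which is linear in $a$ and antilinear in $b$ since $T$ is linear and $\rho$ antilinear. Restricting to $W$ with basis $e_1=1$, $e_2=Z^k$ and using $T(Z^{\pm k})=c_{\pm k}$ together with the normalization above, the Gram matrix is
\[
H=\begin{pmatrix}\langle 1,1\rangle & \langle 1,Z^k\rangle\\ \langle Z^k,1\rangle & \langle Z^k,Z^k\rangle\end{pmatrix}=\begin{pmatrix}1 & c_{-k}\\ c_k & 1\end{pmatrix}.
\]
Positivity of $T$ says exactly that $\langle a,a\rangle=T(a\rho(a))>0$ for every nonzero $a\in W$, so $H$ is positive definite. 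It is also Hermitian: a complex sesquilinear form with $\langle a,a\rangle$ real for all $a$ is automatically Hermitian by polarization, and here $\langle a,a\rangle>0$ is in particular real; equivalently one reads off directly that $c_{-k}=\overline{c_k}$.

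Finally, a positive definite $2\times 2$ Hermitian matrix has positive leading principal minors, so $\det H=1-c_kc_{-k}=1-\abs{c_k}^2>0$, giving $\abs{c_k}<1$ for each $k\neq 0$; together with $c_0=1$ this is the asserted $\abs{c_i}<1$ for $i\neq 0$. I do not expect a genuine obstacle here: the argument is the standard $2\times 2$ Gram-determinant criterion, and the only steps requiring care are checking that $\rho(Z^k)=Z^{-k}$ collapses the diagonal entries to $1$ and that the off-diagonal entries reduce exactly to $c_{\pm k}$ rather than to traces of nontrivial monomials—both of which rely on the special conjugation $\rho(Z)=Z^{-1}$ and the relation $ZZ^{-1}=1$.
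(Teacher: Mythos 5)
Your proof is correct and takes essentially the same approach as the paper: the paper tests positivity on $a=1+\alpha Z^k$ with $\alpha$ varying, which is precisely the positive-definiteness of your Gram matrix on $\mathrm{span}\{1,Z^k\}$, and your determinant criterion $1-\abs{c_k}^2>0$ is the same inequality the paper extracts. Your explicit polarization step establishing $c_{-k}=\ovl{c_k}$ is a nice touch that the paper leaves implicit in asserting positivity (hence reality) of $T(a\rho(a))$.
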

\begin{proof}
Consider an element $a=1+\alpha Z^k$. By positivity of $T$ we have $T(a\rho(a))>0$. Note that \[a\rho(a)=(1+\alpha Z^k)(1+\overline{\alpha}Z^{-k})=1+\abs{\alpha}^2+\alpha Z^k+\overline{\alpha}Z^{-k}.\] Hence \[T(a\rho(a))=1+\abs{\alpha}^2+\alpha T(z^k)+\ovl{\alpha}T(z^{-k}).\] Since this expression is positive for all $\alpha$, we get $\abs{T(z^k)}<1$.
\end{proof}

Assume that $\rho^2=g_l$, so that $T$ is a positive $g_l$-twisted trace.
\begin{theorem}
The element $w(z)$ coincides with the Laurent expansion of a function $f(z)$ holomorphic on a neighborhood of the circle $\abs{z}=1$. The function $f(z)P(qz)$ can be analytically continued to a meromorphic function on $\CN^{\times}$ holomorphic on $1<\abs{z}<q^{-2}$.
\end{theorem}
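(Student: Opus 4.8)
The plan is to convert the trace condition into a single functional identity for $w$, then extract the holomorphy near $\abs{z}=1$ from the coefficient bound of Lemma~\ref{LemCiAtMostOne} sharpened to exponential decay, and finally read the meromorphic continuation off the identity itself.

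First I would rewrite the defining relation of a $g_l$-twisted trace. Using $T(S)=CT(S(z)w(z))$ and the invariance of $CT$ under $z\mapsto\lambda z$, the condition $T(P(q^{-1}Z)R(q^{-1}Z)-P(qZ)R(qZ)Z^{-l})=0$ for all $R$ becomes $CT(P(z)R(z)[w(qz)-q^lz^{-l}w(q^{-1}z)])=0$ for all Laurent polynomials $R$. Since $R$ is arbitrary, this forces the formal identity
\[P(z)\bigl(w(qz)-q^lz^{-l}w(q^{-1}z)\bigr)=0\quad\text{in }V.\]
By Lemma~\ref{LemMultiplicationByP} the bracket lies in the $n$-dimensional kernel of multiplication by $P$. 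Writing $d_m$ for the coefficient of $z^m$ in $w$ (equivalently $d_m=T(z^{-m})$), its coefficients $e_m:=q^md_m-q^{-m}d_{m+l}$ therefore have the explicit shape $e_m=\sum_j Q_j(m)\alpha_j^{-m}$ for polynomials $Q_j$ with $\deg Q_j<m_j$.

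The \textbf{main obstacle} is to upgrade the boundedness $\abs{d_m}<1$ of Lemma~\ref{LemCiAtMostOne} to exponential decay. From $\abs{d_m}<1$ one gets $\abs{e_m}\le q^m+q^{-m}$, hence $\limsup_{m\to+\infty}\abs{e_m}^{1/m}\le q^{-1}$ and $\limsup_{m\to-\infty}\abs{e_m}^{1/\abs{m}}\le q^{-1}$. Comparing these rates with the exact exponential growth $\abs{\alpha_j}^{\mp1}$ of the kernel solution, and using that $P$ has no roots of modulus $q^{\pm1}$, I would conclude that $Q_j\equiv0$ unless $q<\abs{\alpha_j}<q^{-1}$. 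Feeding this back into $d_{m+l}=q^{2m}d_m-q^me_m$ (and into its solved-for version $d_m=q^{-m}e_m+q^{-2m}d_{m+l}$ as $m\to-\infty$) then yields $\abs{d_m}\to0$ exponentially in both directions, since each surviving contribution has modulus $(q\abs{\alpha_j}^{-1})^{m}$ or $(q\abs{\alpha_j})^{\abs{m}}$ with base strictly between $q^2$ and $1$. Thus $w$ is the Laurent expansion of a function $f$ holomorphic on a maximal pole-free annulus $R_1<\abs{z}<R_2$ with $R_1<1<R_2$.

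Finally I would obtain the continuation from the identity itself. The two formal series $G(z):=P(z)w(qz)$ and $q^lz^{-l}P(z)w(q^{-1}z)$ are equal, yet the first converges on $R_1/q<\abs{z}<R_2/q$ and the second on $qR_1<\abs{z}<qR_2$; having the same Laurent coefficients, $G$ is in fact holomorphic on the union annulus $qR_1<\abs{z}<R_2/q$. A direct substitution gives $f(z)P(qz)=z^lG(qz)$, which is then holomorphic on $R_1<\abs{z}<R_2/q^2$, an annulus containing $1<\abs{z}<q^{-2}$; the only possible poles of $f$ there sit at the zeros of $P(qz)$, and these are precisely what the factor $P(qz)$ removes. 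The object that propagates is $G=P\cdot(\text{shifted }f)$: being holomorphic, it lets one recover $f$ meromorphically on an annulus wider by a factor $q^{-2}$, and iterating the same construction (equivalently, using $f(qz)=q^lz^{-l}f(q^{-1}z)$, valid wherever $P\neq0$, as a $q$-difference equation) extends $f$ to a meromorphic function on all of $\CN^\times$ whose poles lie only at the $q$-translates of the roots of $P$. This establishes both assertions.
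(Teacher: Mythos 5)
Your proposal is correct and follows essentially the same route as the paper: the same functional equation for $w$ extracted via invariance of $CT$ under rescaling, the same combination of Lemma~\ref{LemMultiplicationByP} (identifying the defect as an exponential-polynomial kernel element) with Lemma~\ref{LemCiAtMostOne} to force exponential decay of the coefficients in both directions, and the same iteration of the resulting $q$-difference equation to continue $f$ meromorphically to $\CN^{\times}$. Your ``union annulus'' phrasing of the continuation step is just a repackaging of the paper's computation that, by the functional equation, the coefficients of $P(qz)w(z)$ decay like $O(q^{2N}\kappa_+^N N^a)$, so it converges past $\abs{z}=q^{-2}$.
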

\begin{proof}
For all Laurent polynomials $R$ we have \[T\big(P(qz)R(qz)-z^l P(q^{-1}z)R(q^{-1}z)\big)=0.\] Hence
\[CT\big(P(qz)R(qz)w(z)\big)=CT\big(z^l P(q^{-1}z)R(q^{-1}z)w(z)\big).\]
It follows that
\[CT\big(P(z)R(z)w(q^{-1}z)\big)=CT\big(q^lz^l P(z)R(z)w(qz)\big),\]
hence
\[CT\bigg(R(z)\big(P(z)w(q^{-1}z)-q^lz^l P(z)w(qz)\big)\bigg)=0.\]
Since this holds for any Laurent polynomial $R$, we get
\begin{equation}
\label{EqQuasiperiodicityForW}
P(z)w(q^{-1}z)=q^lz^lP(z)w(qz).
\end{equation}

Hence $w(q^{-1}z)-q^lz^lw(qz)$ belongs to the kernel of multiplication by $P(z)$. Using Lemma~\ref{LemMultiplicationByP}, we get
\[w(q^{-1}z)-q^lz^lw(qz)=\sum_{i,j} r_{i,j}f_{i,j}.\] We note that the coefficients on the left-hand side have growth at most $q^{-N}$ when $N$ tends to $+\infty$. On the right-hand side the coefficients have growth $\abs{\alpha_j}^NN^a$, where $\alpha_j$ is a root with the largest absolute value such that $f_{i,j}\neq 0$ for some $i$ and $a$ is a nonnegative integer. In fact, $a$ is the largest number such that for some $k$ we have $f_{a,k}\neq 0$ and $\abs{\alpha_k}=\abs{\alpha_j}$.  Since there are no roots with absolute value $q^{-1}$, the right-hand side grows as $\alpha^NN^a$, where $\abs{\alpha}<q^{-1}$. Using that the coefficient of $z^N$ in $w(q^{-1}z)-q^lz^lw(qz)$ equals to $q^{-N}c_N+O(q^N)$, we get that $c_N=O(\kappa_+^NN^a)$, where $0<\kappa_+<1$. Similarly, $c_{-N}=\kappa_-^NN^b$, where $0<\kappa_-<1$ and $b$ is a nonnegative integer. This proves the first statement.

We turn to the second statement. Consider the expansion $P(qz)w(z)$ for the function $P(qz)f(z)$ in a neighborhood of the circle $\abs{z}=1$. The coefficients of $P(qz)w(z)$ decay exponentially in the negative direction by the reasoning above. It follows from~\eqref{EqQuasiperiodicityForW} that $P(qz)w(z)=sq^{2l}z^l P(qz)w(q^2z)$. Using Lemma~\ref{LemCiAtMostOne} again, we see that the coefficients of $z^N$ in $P(qz)w(z)$ is $O(q^{2N}\kappa_+^{N}N^a)$ when $N$ tends to $+\infty$. It follows that $P(qz)w(z)$ gives a function holomorphic in the annulus $r_1<\abs{z}<r_2$, where $r_1<1<q^{-2}<r_2$. Hence $f(z)$ can be analytically continued to a meromorphic function in the annulus $r_1<\abs{z}<r_2$. Using~\eqref{EqQuasiperiodicityForW} again, we see that $f(z)$ can be analytically continued to a meromorphic function on $\CN^{\times}$.
\end{proof}
\section{The positivity condition}
Consider the antilinear automorphism $\rho$ such that $\rho(Z)=Z^{-1}$, $\rho(u)=Z^k q^{k}v$, $\rho(v)=Z^{-k} q^{k} u$. Similarly to~\cite{K} $\rho$ is well-defined when $P(z)=\ovl{P}(z^{-1})$. We have $\rho^2=g_{2k}$. Since we have a full classification for the case $k=0$, assume that $k\neq 0$. The case when $k=\frac{n}{2}$ is of particular interest, since it corresponds to the ``standard'' Schur index of a four-dimensionall gauge theory~\cite{G}.

As in~\cite{K}, Lemma 3.2, it is enough to check the positivity condition $T(a\rho(a))>0$ when $a\in \CN[Z,Z^{-1}]$ or $a\in u\CN[Z,Z^{-1}]$. The first condition translates to $T(R(z)\ovl{R}(z^{-1}))>0$ for all nonzero Laurent polynomials $R$, whereas the second one translates to 
\[T(uR(qZ)Z^kq^kv\ovl{R}(qZ^{-1}))>0,\] which is equivalent to \[T(z^k P(q^{-1}z)R(q^{-1}z)\ovl{R}(qz^{-1})>0.\]

Reasoning as in~\cite{K}, Theorem 3.3, this is equivalent to $w(z)$ and $z^k P(z)w(qz)$ being nonnegative on the unit circle $\abs{z}=1$.

From~\eqref{EqQuasiperiodicityForW} we have
\begin{equation}
\label{EqRealQuasiperiodicityForW}
w(q^{-1}z)=q^{2k}z^{2k}w(qz).
\end{equation} Let $\th(z)=\th_{11}(\frac{\log z}{2\pi i})$, where $\th_{11}$ is a Jacobi theta function for period $\tau=\frac{\log q}{\pi i}$. Hence $\th(1)=0$. It is well-known that any meromorphic function satisfying~\eqref{EqRealQuasiperiodicityForW} can be expressed as $w=cz^l\frac{\prod_i \th(\frac{z}{\alpha_i})}{\prod_j \th(\frac{z}{\beta_j})}$, where $\alpha_1,\ldots,\alpha_N$ and $\beta_1,\ldots,\beta_M$ are the zeroes and poles of $w$. Since $w(z)P(qz)$ is holomorphic on $1<\abs{z}<q^{-2}$, elements $q\beta_1,\ldots,q\beta_M$ are roots of $P$. Moreover, in order to cancel out poles,  $q\beta_1,\ldots,q\beta_M$ should satisfy $q<\abs{q\beta_i}<q^{-1}$. Hence we can assume that $M$ equals to the number of zeroes of $P$ in the annulus $q<\abs{z}<q^{-1}$.

We have $\th(q^2z)=q^{-1}z^{-1}\th(z)$. In particular, multiplying $\alpha_1$ or $\beta_1$ by a power of $q^2$, we can assume that $l=0$. Also \[\th(q^2\frac{z}{\alpha})=q^{-1}\alpha z^{-1}\th(\frac{z}{\alpha}).\] It follows that \[w(q^2z)=w(z)\frac{\prod_i (q^{-1}\alpha_i z^{-1})}{\prod_j (q^{-1}\beta_j z^{-1})}=w(z)q^{-N+M}z^{M-N}\frac{\prod_i\alpha_i}{\prod_j\beta_j}.\]
Comparing with~\eqref{EqRealQuasiperiodicityForW} we get $N=M-2k$, $\prod_i\alpha_i=\prod_j\beta_jq^{-N+M}$. Since $P(z)=\ovl{P}(z^{-1})$ the product $\prod_j\beta_j$ has absolute value $q^{-M}$. Multiplying $Z$ by a complex number with absolute value one if necessary, we can assume that $\prod_j\beta_j=q^{-M}$. We get a classification similar to~\cite{K}: possible $w$ are parametrized by $n-2k$ parameters $\alpha_i$ with fixed product $q^N$, and $c$. Note that for $k=\frac{M}{2}$ we have no numerator and the function $w$ is unique up to scaling.

Similarly to~\cite{K}, Section 3.2 the denominator of $w$ is positive on $S^1$ and $qS^1$. Also reasoning similarly to~\cite{K}, Section 3.2, we see that when $c$ is positive and $\alpha_1,\ldots,\alpha_N$ are divided into pairs $\alpha_i$ and $\alpha_j=q^2\ovl{\alpha_i^{-1}}$, the numerator is positive on $S^1$ and $qS^1$. It follows that the cone of positive traces has maximal possible real dimension $N=M-2k$ (one if $N=0$).

Hence we obtain the following
\begin{theorem}
Let $\mc{A}$, $\rho$ be as above. Then the dimension of the cone of positive traces is $N=M-2k$, one if $N=0$, and there are no positive traces if $N<0$. In particular, when $k=\frac{n}{2}$, a positive trace exists if and only if all roots of $P$ belong to the annulus $q<\abs{z}<q^{-1}$, and if it is exists, it is unique up to scaling.
\end{theorem}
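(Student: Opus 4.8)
The plan is to obtain the theorem as the culmination of the theta-function classification carried out just above, organized into three steps: realize every normalized positive trace as a quasiperiodic function $w$ of theta-quotient type, read off the discrete invariant $N = M - 2k$ that the transformation law forces on such a function, and then intersect with the positivity locus and count its real dimension. Throughout I identify a trace normalized by $T(1) = 1$ with its generating series $w(z) = \sum_i c_i z^{-i}$, and I use the preceding theorem, which guarantees that $w$ is the Laurent expansion near $\abs{z} = 1$ of a meromorphic function $f$ on $\CN^{\times}$ whose poles are constrained by $P$.

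First I would set up the parametrization. By~\eqref{EqRealQuasiperiodicityForW} the function $w$ satisfies $w(q^{-1}z) = q^{2k}z^{2k}w(qz)$, so it is quasiperiodic on $\CN^{\times}/q^{2\ZN}$ and admits an expression $w = c\,z^l\prod_i\th(z/\alpha_i)/\prod_j\th(z/\beta_j)$ with $\alpha_i$ the zeros and $\beta_j$ the poles. Requiring $f(z)P(qz)$ to be holomorphic on $1 < \abs{z} < q^{-2}$ forces the $q\beta_j$ to be roots of $P$ lying in $q < \abs{z} < q^{-1}$, so $M$ equals the number of such roots; then, using $\th(q^2z) = q^{-1}z^{-1}\th(z)$, I can normalize to $l = 0$ and $\prod_j\beta_j = q^{-M}$. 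Matching the resulting transformation factor of the theta-quotient against~\eqref{EqRealQuasiperiodicityForW} pins down the number of zeros as $N = M - 2k$ and the constraint $\prod_i\alpha_i = q^N$. Since $N$ is a count of numerator theta factors it must be nonnegative; hence $M < 2k$ leaves no admissible $w$ and no positive trace, which disposes of the $N < 0$ assertion.

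Next I would impose positivity. The discussion above has already reduced positivity of $T$ to nonnegativity of $w$ and of $z^kP(z)w(qz)$ on $\abs{z} = 1$, equivalently to positivity of the theta-quotient on $S^1$ and $qS^1$. The denominator $\prod_j\th(z/\beta_j)$ is positive on both circles, so the content lies in the numerator, and I would show, reasoning as in~\cite{K}, Section 3.2, that the numerator is positive on $S^1$ and $qS^1$ precisely when $c > 0$ and the zeros split into conjugate pairs $\alpha_j = q^2\ovl{\alpha_i^{-1}}$. Granting this, the positive traces form a cone, and a count of its parameters --- the conjugate-paired zeros subject to $\prod_i\alpha_i = q^N$, together with the positive scalar $c$ --- gives real dimension $N$, degenerating to a single ray (dimension one) when $N = 0$. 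For the distinguished case $k = \frac{n}{2}$ one has $N = M - n$; since $M \le n$ always while $N \ge 0$ is necessary, a trace exists iff $M = n$, that is iff every root of $P$ lies in $q < \abs{z} < q^{-1}$, and then $N = 0$ leaves an empty numerator, so $w$, and hence $T$, is unique up to the scaling $c$.

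The main obstacle is the positivity analysis of the previous paragraph, namely proving both directions of the pairing criterion together with the openness needed for the cone to be genuinely $N$-dimensional. Sufficiency --- that $c > 0$ and the conjugate pairing render the numerator nonnegative on both circles --- is a direct computation with the reality properties of $\th$. The delicate direction is necessity: that a trace positive on all of $\mc{A}$, not merely nonnegative on the finite families of test elements used in Lemma~\ref{LemCiAtMostOne}, forces exactly this symmetric configuration of zeros and $c > 0$; here I would lean most heavily on the argument of~\cite{K}, Section 3.2. Verifying that these positive configurations sweep out a full-dimensional family then yields the maximality of the cone and completes the dimension count.
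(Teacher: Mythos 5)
Your proposal is correct and follows essentially the same route as the paper: the theta-quotient classification forced by~\eqref{EqRealQuasiperiodicityForW}, the normalization $l=0$, $\prod_j\beta_j=q^{-M}$, the matching of transformation factors to get $N=M-2k$ and $\prod_i\alpha_i=q^N$, the reduction of positivity to nonnegativity of $w$ and $z^kP(z)w(qz)$ on $S^1$ via~\cite{K}, and the same parameter count with the $k=\frac{n}{2}$ specialization. The only slight difference is bookkeeping: where you invoke necessity of the conjugate-pairing to cap the dimension, the paper instead observes that $N$ is the maximal possible real dimension (positivity forces the Hermitian/reality constraint, halving the $2N$ real parameters), with the paired configurations supplying the matching lower bound.
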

In particular, we confirm a conjecture of Gaiotto and Teschner~\cite{GT} in the case of abelian gauge theories: if $A$ is a $K$-theoretic Coulomb branch and $\rho$ corresponds to the ``standard'' Schur index, then the positive trace on $A$, if it exists, is unique up to scaling.
\begin{acknowledgement}
I would like to thank the organizers of the LT-15 conference for the opportunity to listen to interesting talks, give a talk and write this contribution. I am thankful to Davide Gaiotto for explaining to me the physical meaning of different choices of the conjugation $\rho$.
\end{acknowledgement}

\end{document}